\newtheorem{thm}{Theorem}[section]
\newtheorem{prop}[thm]{Proposition}
\newtheorem{conj}[thm]{Conjecture}
\newtheorem{rmk}[thm]{Remark}
\theoremstyle{remark}
\newtheorem*{ackno}{Acknowledgements}
\def\dd{\mathrm{d}}
\def\R{\mathbb{R}}
\def\Z{\mathbb{Z}}
\newcommand{\mycomment}[1]{}
\title[]{Leading order asymptotics for non-local energies and the Read-Shockley law}
\author[]{Peter J. Grabner}
\address[Grabner]{Institute of Analysis and Number Theory, 
Graz University of Technology, 
Kopernikusgasse 24/II, 
8010 Graz, 
Austria}
\email{peter.grabner@tugraz.at}
\author[]{Florian Theil}
\address[Theil]{Mathematics Institute, Warwick University, CV4 7AL Coventry, UK}
\email{f.theil@warwick.ac.uk}
\begin{document}

\begin{abstract}
    We study an energy minimization problem $\sum_{i \neq j} W(z_i - z_j)$ for $N$ points $\left\{z_1, \dots, z_N\right\}$ with applications in dislocation theory. The $N$ points lie in the two-dimensional domain $\mathbb{R} \times [-\pi, \pi]$, %who are trying to minimize their interaction energy  where 
    where the kernel $W$ is derived from the Volterra potential $V(x,y) = \frac{x^2}{x^2+y^2}-\frac12\log(x^2+y^2)$. We prove that the minimum energy is given by $- N \log{N} +\mathcal{O}(N)$. This lower bound recovers the leading order term of the Read-Shockley law characterizing the energy of small angle grain boundaries in polycrystals.
\end{abstract}
\maketitle

\section{Introduction}
%Our main objective is to characterize the leading order of grain boundary energy densities. 
The classical Read-Shockely formula 
$$ \gamma = -\gamma_0\, \theta \log \theta + {\mathcal O}(\theta),  \quad 0\leq \theta \ll 1$$
expresses the energy density $\gamma$ of small angle grain boundaries in terms of the tilt angle $\theta$ and the constant $\gamma_0= \frac{\mu b}{4\pi(1-\nu)}$ where $b$ is the length of the Burgers vector, $\nu \in [0,1)$ is the Poisson ratio and 
$\mu >0$ is the shear modulus. 
In~\cite{Read-Shockley+1950} this formula is obtained as the leading order term for a straight equi-spaced dislocation array in a two-dimensional isotropic linearly elastic medium. This derivation does not address the question whether such configurations are energetically optimal.

Recently there has been renewed interest in detailed mathematical modeling of grain boundaries, e.g. \cite{QiuaSrolovitzRohrerHanaSalvalaglio2025}. The analysis of phenomena like grain boundary motion involves careful use of grain boundary energy models,  see \cite{HanSrolovitzSalvalaglio2022}. Therefore, it is desirable to derive the Read-Shockley formula from variational principles.

The first step in this direction has been taken by
Luckhaus and Lauteri; in~\cite{Lauteri-Luckhaus:2016} they analyzed a non-linear variational continuum model
$$ {\mathcal F}_\epsilon(A,S) =\frac1\epsilon \left( \int_{\Omega \setminus S_\epsilon} \mathrm{dist}^2(A,\mathrm{SO}(2))\, \dd x + |S_\epsilon|\right),$$
where $\Omega = [-1,1]^2 \subset \mathbb{R}^2$ is a square, $S \subset \Omega$, $S_\epsilon  = S+B_\epsilon(0)$, $A \in L^1(\Omega, \R^{2\times 2})$ and the pair $(A,S)$ satisfies admissibility conditions. %has the property that  $\mathrm{supp}(\mathrm{curl}(A)) \subset S$ if $\gamma$ is a rectifiable closed curve.
Their main result is matching upper and lower bounds for the infimum of $\mathcal F_\epsilon$ which are uniform in $\epsilon$ and consistent with the Read-Shockley scaling. More precisely, it is shown that
\begin{align}
\label{eq.RS-scaling}
 C_1 \leq \frac{\inf \mathcal F_\epsilon(A,S)}{\theta \log \theta}\leq C_2
\end{align}
if
$A$ satisfies the boundary condition
$$ A|_{x=\pm 1} = \begin{pmatrix}
\cos \theta & \pm \sin \theta\\
\mp \sin \theta & \cos \theta
\end{pmatrix}.$$ 
It is worth mentioning that in \cite{Fortuna-Garroni-Spadaro-2025} it is shown that $\mathcal F_\epsilon$  has a $\Gamma$-limit for $\epsilon\to0$ which can be written in the form of a grain boundary energy density. 

Currently, it is not known, if it is possible to choose $C_1 = C_2$ in~\eqref{eq.RS-scaling} as $\epsilon, \theta \to 0$. To address this problem we revisit the quadratic setting where the Read-Shockley formula has been derived. Our main result is a rigorous proof that the formula holds even if the dislocations are not necessarily equispaced, or on a straight line (Theorem~\ref{thm.ReadShockley}).

The paper is organized as follows. In Section~\ref{sec.Read-Shockley} we recall the dislocation model introduced by Read and Shockley \cite{Read-Shockley+1950} and present it in a variational formulation, which allows then to put it in the context of discrete energy minimization where one seeks to minimize finite sums of the form
\begin{equation*}    {\mathcal E}(\omega) = \sum_{\substack{z,z'\in\omega\\z\neq z'}}K(z-z').
\end{equation*}
Here $\omega$ is a finite subset of the underlying space ($\Omega=\mathbb{R}\times\mathbb{T}$) and $K:\Omega\times\Omega\to\mathbb{R}$ is a kernel function; we set $N=|\omega|$. The kernel function $K$ is singular at $\{0\}$ which prevents the use of Bochner's theorem. The key step in our approach is the construction of a positive definite regularization $K_t \in L^\infty(\Omega)$ so that $K-K_t \geq o(1)$ for $0<t\ll 1$.

Taking a sequence of point sets $\omega$ with $\frac1N\sum_{z\in\omega}\delta_z\rightharpoonup\mu$ and rescaling with $N^{-2}$ the corresponding discrete energies converge in the sense of $\Gamma$-limits to the continuous energy
\begin{equation*}
    E_K(\mu)=\iint\limits_{\Omega\times \Omega}K(z-z')\,\dd\mu(z)\,\dd\mu(z'),
\end{equation*}
where the measure $\mu$ ranges through all Borel probability measures on $\Omega$, cf.~\cite{Mora-Rondi-Scardia2019}. 
Especially, the empirical measures of point configurations $\omega$ minimizing the discrete energy converge weakly to a minimizer of the continuous energy (see also \cite[Chapter~4]{Borodachov_Hardin_Saff2019:discrete_energy}). 
%this is an immediate consequence of $\Gamma$-convergence. 

Continuous energies are the object of study of classical potential theory (e.g. \cite{Landkof1972:potential_theory}). An important general result is the fact that under the condition of strict positive definiteness of the kernel $K$ and compactness of the space $\Omega$ the empirical measures given by the discrete minimizers weakly tend to the unique continuous minimizer. In our case we neither have strict positive definiteness nor compactness of the space, which forces us to use a different methodology. 

In Section~\ref{sec:cont-energ} we study the continuous energy problem for the Read-Shockley kernel and observe that without an additional confining external potential the continuous minimizer of the energy is not unique; we are still able to characterize all minimizers.
In particular, it turns out that minimizers are not necessarily concentrated on a one-dimensional set (Proposition~\ref{prop:cont-no-ext}). On the other hand, the concavity of the Read-Shockley energy with respect to $\theta$ promotes grain-boundaries with a one-dimensional support. This shows that the discrete model measures genuinely finer properties than the continuous $\Gamma$-limit. 

We also study the continuous energy in the presence of an external potential proportional to $|x|$ (the $x$-coordinate of the point) and determine a phase transition between non-existence and uniqueness of the minimizer depending on the strength of the external field.

\section{The Read-Shockley formula}\label{sec.Read-Shockley}
\subsection{Derivation of the variational model}
We work in the setting of two-dimensional, linear  elasticity. 
The elastic and strain fields $\sigma:\R^2 \to  \R^2_\mathrm{sym}$ and $\beta:\R^2 \to \R^{2\times 2}$ corresponding to a collection of
edge dislocations with Burgers vector ${\bf b} = (b,0)$ and cores located in $\omega \subset \R^2$ satisfy the equilibrium equations 
\begin{align}
\label{eq.equilibrium-equations}
\begin{cases}
\nabla \cdot \sigma &= {\bf 0},\\
\mathrm{curl}(\beta) &= \sum\limits_{z \in \omega} \delta_{z}{\bf b},
\end{cases}
\end{align}
in the sense of distributions. See~\cite{cermelli-leoni2006, GarroniLeoniPonsiglione2008} for a discussion of the model and the properties of solutions of~\eqref{eq.equilibrium-equations}.

Analytic expressions for $\sigma$ and $\beta$ are well known in the case of isotropic elasticity where $\sigma$ and $\beta$ satisfy the constitutive relation 
$\sigma = \frac{2\mu \nu}{1-2\nu}\mathrm{tr}(\beta)\mathrm{Id}+\mu (\beta+\beta^T)$.
Define
\begin{align*} 
u_0 =&\frac{-b}{8\pi(1-\nu)} \begin{pmatrix} 4(1-\nu)\arctan (x,y)-\frac{2xy}{r^2}\\
(1-2\nu)\log(x^2+y^2)+\frac{x^2-y^2}{r^2}
\end{pmatrix},\\[1em]
\sigma_0 =& \frac{2\gamma_0}{r^4}\begin{pmatrix} -y(3x^2+y^2) & x(x^2-y^2) \\ x(x^2-y^2) & y(x^2-y^2)
\end{pmatrix},
\end{align*}
where $r^2=x^2+y^2$ and $\arctan(x,y)\in (-\pi,\pi]$ denotes the argument of $x+iy$ with a branch cut along $\mathcal C=\{(0,y)\;:\; y<0\}\subset \R^2$, cf.
\cite[eqns. (3.43), (3.45), (3.46)]{Hirth-Lothe+1982:theory-of-dislocations}.
Then $\beta = \sum_{z \in \omega}\nabla u_0(\cdot - z)$, $\sigma = \sum_{z \in \omega}\sigma_0(\cdot - z)$ satisfy~\eqref{eq.equilibrium-equations}. Since $u_0$ has a jump discontinuity along $\mathcal C$ we need to clarify that $\nabla u_0$ is defined to be the regular part of the distributional derivative, i.e. it ignores the jump.

The elastic energy density associated with $\sigma$ and $\beta$ is given by 
$$ f(\zeta)=  \frac{1}{2} \beta(\zeta):\sigma(\zeta),$$
where $\beta:\sigma = \mathrm{trace}(\beta^T \sigma) = \sum_{1\leq i, j \leq 2} \beta_{ij}\sigma_{ij}$. 
Note that $f$ is singular on $\omega$; 
the total elastic energy stored in a compact domain $\Omega$ such that $\Omega\cap \omega = \emptyset$ is given by 
$$ \mathcal E = \int_{\Omega}f(\zeta)\, \dd \zeta.$$

Since $f$ is a bilinear function in $\beta$ and $\sigma$ we can decompose it into pair-interactions
\begin{align*}
f(\zeta)=&\frac12 \left(\sum_{z \in \omega}\beta_0(\zeta-z)\right): \left(\sum_{z' \in \omega} \sigma_0(\zeta-z')\right)
= \sum_{z,z'\in \omega} q_{z,z'}(\zeta),
\end{align*}
where
$$q_{z,z'}(\zeta) = \frac12\beta_0(\zeta-z):\sigma_0(\zeta-z').$$
Now we consider the energy density created by periodic configurations $\widehat {\omega} \subset \R^2$ satisfying $\widehat {\omega} = \bigcup_{n \in \Z}(\omega+2\pi n h e_2) $, where $h>0$, $e_2=(0,1)$ and $\omega \subset \Omega_h = \R \times [-\pi h , \pi h)$.
Let
$$q^\mathrm{per}_{z,z'}(\zeta)= \frac12\sum_{n,n' \in \Z}\beta(\zeta-z-2\pi n h\, e_2):\sigma(\zeta-z'-2\pi n' h\,e_2),$$
then
\begin{align} \label{eq.toten}
 \sum_{z,z'\in \widehat {\omega}}\int_{\Omega} q_{z,z'}(\zeta)\, \dd \zeta = \sum_{z,z' \in \omega}\int_{\Omega} q_{z,z'}^\mathrm{per}(\zeta)\,\dd \zeta
\end{align}
as long as $\Omega$ is compact and $\omega \cap \Omega= \emptyset$. With this notation the normalized total energy stored in $\Omega_h = \R \times [-\pi h,\pi h]$ admits the representation
\begin{align} \label{eq.pairsumprep}
{\mathcal E}(\omega) = \sum_{z \in \omega}U_\mathrm{self}(z)+ \sum_{z\neq z' \in\omega} U_\mathrm{int}(z-z')
\end{align}
where
\begin{align} \label{eq.Uself}
U_\mathrm{self}(z) &= \frac1{2\pi h}\int_{\Omega_h \setminus B_\rho(z)} q^\mathrm{per}_{z,z}(\zeta) \, \dd \zeta,\\
\label{eq.Uint}
U_\mathrm{int}(z,z') &= \frac1{2\pi h}\int_{\Omega_h} q^\mathrm{per}_{z,z'}(\zeta)\, \dd \zeta.
\end{align}
To avoid the complication that $q_{z,z} \not \in L^1(\Omega_h)$
we have removed $B_\rho(z)$ from the domain of integration in $U_\mathrm{self}$, see~\cite{cermelli-leoni2006} for a discussion of this step. The radius $\rho$ is sometimes referred to as `core radius' in the literature, its purpose is to account for atomistic effects. The precise value only affects the infimum of $\mathcal E$ at non-leading order when $N =o(h)$, $h\gg 1$. For the sake of notational simplicity we will choose $\rho =b$. 

\subsection{Main results}
Now we are in a position to provide a variational derivation of the leading order constant in the Read-Shockley formula. 
The key property in grain boundary modeling is the density of dislocations per unit-length. The tilt $\theta$ angle between two adjacent grains satisfies the relation 
$$ \sin \theta = \frac{b}{D},$$
where $D$ is the average spacing between dislocations. In our notation $D=\frac{2\pi h}N$, so for small angle grain boundaries where $N={\mathcal O}(h)$ one obtains the approximation
$$ \theta = \frac{N b}{2\pi h} \text{ for } 0<\theta \ll 1.$$
\begin{thm}[Read-Shockley formula]
\label{thm.ReadShockley}
Let $\mathcal E$ be given by~\eqref{eq.pairsumprep}. Then
\begin{align}
\label{eq.RSequation}
\min_{|\omega_N| = N} {\mathcal E}(\omega) = -\gamma_0\,\theta \log \theta+ {\mathcal O}(\theta), \quad 0< \theta \ll 1, \end{align}
where $\gamma_0=\frac{\mu b}{4\pi (1 - \nu)}$.
\end{thm}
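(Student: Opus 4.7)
My plan is to reduce Theorem~\ref{thm.ReadShockley} to the non-dimensional bound announced in the abstract and then prove that bound by combining an explicit Read-Shockley trial with a new lower bound built from a positive-definite regularization of the kernel $W$.

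\textbf{Reduction to the cylinder.} Rescale the $y$-coordinate by $h$ so that any configuration $\omega \subset \Omega_h$ becomes $\omega' \subset \Omega = \R\times\mathbb{T}$. The periodic sums in~\eqref{eq.Uint} can be evaluated in closed form via the Weierstrass-type identity $\sum_{n\in\Z}((y - 2\pi n)^2 + x^2)^{-1} = \sinh x \,/ (2x(\cosh x - \cos y))$ and its derivatives, identifying $U_\mathrm{int}(z - z') = c_1\, W(z - z')$ with $W$ the $y$-periodization on $\Omega$ of the Volterra potential $V(x,y) = \tfrac{x^2}{x^2 + y^2} - \tfrac12 \log(x^2+y^2)$ and $c_1$ the natural physical prefactor. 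A direct computation shows that $U_\mathrm{self}$ contributes an explicit constant per point. After substituting $\theta = Nb/(2\pi h)$, Theorem~\ref{thm.ReadShockley} reduces to the abstract's assertion
\begin{equation*}
\min_{|\omega'| = N}\ \sum_{\substack{z,z'\in\omega'\\z\neq z'}} W(z - z') = -N\log N + \mathcal{O}(N).
\end{equation*}

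\textbf{Upper bound.} The classical Read-Shockley choice $\omega'_\ast = \{(0,2\pi k/N) : 0 \leq k < N\}$ is a single vertical line of $N$ equispaced points. Its pair sum collapses to $N\sum_{k=1}^{N-1} W(0,2\pi k/N)$. Since $W(0,\cdot)$ behaves like $-\log|1 - e^{i\cdot}| + \mathcal{O}(1)$, the Mahler-measure identity $\sum_{k=1}^{N-1}\log|1 - e^{2\pi i k/N}| = \log N$ yields the leading term $-N\log N$; the bounded angular term in $V$ and the regular part of the periodization contribute only $\mathcal{O}(N)$.

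\textbf{Lower bound, and the main obstacle.} Following the strategy flagged in the introduction, I would construct, for each $t \in (0,1]$, a kernel $K_t \in L^\infty(\Omega)$ that is Bochner positive-definite (nonnegative partial Fourier transform on $\R\times\Z$), satisfies $W(z) - K_t(z) \geq -\varepsilon(t)$ for all $z\in\Omega\setminus\{0\}$ with $\varepsilon(t) = \mathcal{O}(t)$, and has on-diagonal value $K_t(0) \leq \log(1/t) + \mathcal{O}(1)$. Positive-definiteness forces $\sum_{z,z'\in\omega'} K_t(z - z') \geq 0$, whence
\begin{equation*}
\sum_{z\neq z'} W(z - z') \;\geq\; -N\,K_t(0) - N(N-1)\,\varepsilon(t),
\end{equation*}
and optimizing $t \sim 1/N$ delivers the desired $-N\log N + \mathcal{O}(N)$. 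The construction of $K_t$ is the crux of the argument: while the log piece of $V$ has a formally nonnegative Fourier density and can be truncated on the scale $1/t$ to yield a positive-definite approximant with a $\log(1/t)$ on-diagonal value, two features make the quantitative construction delicate. First, the sign-indefinite angular term $x^2/(x^2+y^2)$ in $V$ must be absorbed entirely into the $\varepsilon(t)$ remainder without destroying Fourier positivity. Second, the periodized $W$ grows like $|x|$ at infinity, so uniform pointwise control of $W - K_t$ on the non-compact factor $\R$ requires careful matching of the large-$|x|$ asymptotics. A Littlewood-Paley-type decomposition in the $\xi$-variable combined with the explicit Fourier-series expansion of $W$ should handle both obstacles, but this is the step I expect to be the technical heart of the paper.
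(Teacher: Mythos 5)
Your proposal reproduces the paper's architecture exactly: the same reduction to the cylinder via the periodized Volterra kernel, the same equispaced vertical configuration for the upper bound (your Mahler-measure identity $\sum_{k=1}^{N-1}\log|1-e^{2\pi ik/N}|=\log N$ is the paper's sine-product formula in disguise), and for the lower bound the same scheme: a bounded positive-definite regularization $K_t$ with $W\geq K_t-\varepsilon(t)$, $\varepsilon(t)=\mathcal O(t)$, $K_t(\mathbf 0)\leq\log(1/t)+\mathcal O(1)$, optimized at $t\sim1/N$. But the one step you declare to be ``the crux'' and leave open --- actually producing $K_t$ --- is precisely the content of the paper's key lemma (Proposition~\ref{prop.mainlowerbound}). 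Specifying the properties a kernel would need and deferring its construction is a genuine gap: without it the lower bound, and hence the theorem, is not proved. (A smaller point: the self-energy per point is $\frac{\gamma_0 b}{2\pi h}(\log h+\mathcal O(1))$, not an $h$-independent constant; the $\log h$ term is essential, since it is what combines with $-N\log N$ to produce $-\theta\log\theta$ after substituting $\theta=Nb/(2\pi h)$.)

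Moreover, the two obstacles you anticipate are not actually present, and the construction is far more elementary than the Littlewood--Paley scheme you sketch. First, the angular term does not need to be ``absorbed into the $\varepsilon(t)$ remainder'': the full kernel $W_1$ (angular plus logarithmic part together) has the nonnegative Fourier density $k_2^2/\|k\|^4$ on $\R\times\Z$, so Fourier positivity is a property of the sum and nothing has to be hidden in the error. Second, $W_1$ does not grow like $|x|$; the angular and logarithmic parts cancel at infinity, as is manifest from the expansion $W_1(x,y)=\sum_{n\geq1}\frac1n(1+n|x|)e^{-n|x|}\cos(ny)$, which decays exponentially as $|x|\to\infty$, so there are no large-$|x|$ asymptotics to match. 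The paper's regularization is simply the Abel--Poisson smoothing of this series,
\begin{equation*}
W_t(x,y)=\sum_{n\geq1}\frac1n(1+n|x|)e^{-n|x|}\cos(ny)\,e^{-nt}
=\frac{|x|\sinh(|x|+t)}{2(\cosh(|x|+t)-\cos y)}-\frac12\log\bigl(2(\cosh(|x|+t)-\cos y)\bigr)+\frac t2,
\end{equation*}
which is positive definite because the multiplier $e^{-nt}$ is the Fourier series of the nonnegative Poisson kernel. The pointwise bound $W_1\geq W_t-\frac t2$ follows from an elementary monotonicity argument: the $t$-derivative of $W_t-\frac t2$ has a numerator that is linear in $\cos y$, so it suffices to check non-positivity at $\cos y=\pm1$, where it factors into manifestly non-positive expressions. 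Finally $W_t(\mathbf 0)=-\log(1-e^{-t})\leq\log(1/t)+\mathcal O(1)$, and taking $t=2/N$ gives your displayed inequality with the explicit constant $C=1-\log 2$. So your scheme is the right one, but the lemma you postpone is the theorem's actual mathematical content, and it admits a short closed-form proof rather than a harmonic-analysis construction.
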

The proof of~Theorem~\ref{thm.ReadShockley} and other results in the article crucially rely on an analytic representation of $U_\mathrm{int}$. This result is well known, we include it here for the convenience of readers.
\begin{prop} \label{prop.derivation}
Let  
\begin{align} 
\label{eq.Walpha}
 W_\alpha(x,y)= \frac{1}{2}\left[\frac{\alpha\,x \sinh x}{\cosh x-\cos y}-\log(2(\cosh x-\cos y)) +(1-\alpha)|x|\right].
\end{align}
The functions $U_\mathrm{self}$ and $U_\mathrm{int}$ admit the representation
\begin{align}
\label{eq.self} U_\mathrm{self}(z) =& \frac{\gamma_0 b}{2\pi h}(\log h+{\mathcal O}(1)), \quad h\gg 1,\\ 
\label{eq.int} U_\mathrm{int}(z,z')=&\frac{\gamma_0 b}{2\pi h} \,W_1((z-z')/h).
\end{align}

\end{prop}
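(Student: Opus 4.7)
The plan is to reduce the tensor integrals \eqref{eq.Uself}--\eqref{eq.Uint} to a scalar potential, periodize in closed form, and extract the prefactors by rescaling. First I exploit translation invariance in the $y$-direction: since $q^{\mathrm{per}}_{z,z'}(\zeta)$ depends only on $\zeta-z$ and $\zeta-z'$ modulo the $2\pi h$-period, shifting $\zeta\mapsto\zeta+z$ shows that $U_{\mathrm{self}}(z)$ is independent of $z$ and $U_{\mathrm{int}}(z,z')$ depends only on $z-z'$. Rescaling $\zeta = h\tilde\zeta$ maps $\Omega_h$ to the fixed strip $\R\times[-\pi,\pi]$ and factors out $\frac{\gamma_0 b}{2\pi h}$ from the $1/r^2$ scaling of $\sigma_0$.

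Next I convert the tensor contraction $\tfrac12\,\beta^{\mathrm{per}}:\sigma^{\mathrm{per}}$ into a scalar kernel. Writing $\sigma_0 = \mathrm{curl}\,\mathrm{curl}\,\phi_0$ for the Airy stress function $\phi_0 \propto y\log r$ of a single edge dislocation, together with $\beta_0 = \nabla u_0$ and the equilibrium identities $\mathrm{div}\,\sigma^{\mathrm{per}}=0$ and $\mathrm{curl}\,\beta^{\mathrm{per}} = b\sum_n \delta_{2\pi n h e_2}$, integration by parts reduces the double contraction to boundary terms at $|x|\to\infty$ plus the value at $\zeta = z-z'$ of the periodization of the Volterra potential $V(x,y)=\frac{x^2}{x^2+y^2}-\tfrac12\log(x^2+y^2)$ advertised in the abstract.

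The periodization itself is then evaluated via a Mittag-Leffler expansion for the rational piece,
\[
\sum_{n\in\Z}\frac{x^2}{x^2+(y+2\pi n)^2} \;=\; \frac{x\sinh x}{2(\cosh x - \cos y)},
\]
together with the Weierstrass product
\[
2(\cosh x - \cos y) \;=\; (x^2+y^2)\prod_{n\neq 0}\frac{(y+2\pi n)^2+x^2}{(2\pi n)^2},
\]
whose logarithm gives, after renormalization of the divergent constant, $\sum_{n\in\Z}\log(x^2+(y+2\pi n)^2) = \log(2(\cosh x-\cos y)) + C$. Combining these two identities produces exactly the kernel $W_1(x,y)$ of \eqref{eq.Walpha}, proving \eqref{eq.int}. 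For the self-energy, the same analysis with $z'=z$ restricted to $\Omega_h\setminus B_\rho(z)$ yields $\frac{\gamma_0 b}{2\pi h} W_1(\zeta/h)$ evaluated as $\zeta\to 0$; the logarithmic singularity $-\tfrac12\log(|\zeta|^2/h^2)$ integrated over $|\zeta|>\rho=b$ contributes $\log(h/\rho)+O(1) = \log h + O(1)$, establishing \eqref{eq.self}.

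The main obstacle I anticipate is careful bookkeeping at infinity. The periodized stress decays exponentially in $|x|$, but the associated displacement grows linearly in $|x|$, so one must verify that the boundary terms at $|x|\to\infty$ produced by the integration by parts genuinely vanish and that the two $|x|$-linear asymptotic contributions arising from the closed-form sums above cancel precisely. This cancellation is what selects the $\alpha=1$ member of the family $\{W_\alpha\}$ in \eqref{eq.Walpha}: a different choice of regularization or of boundary decomposition would produce a different $\alpha$, which is precisely why \eqref{eq.Walpha} is formulated as a one-parameter family.
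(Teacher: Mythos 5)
Your periodization identities coincide exactly with Step~1 of the paper's proof (the cotangent expansion for the rational part, the Weierstrass sine product for the logarithm), but your reduction of the tensor integral takes a different route: an Airy stress function $\phi_0\propto y\log r$ together with $\mathrm{curl}\,\beta=\delta\,\mathbf{b}$, yielding a point evaluation of the periodized potential, whereas the paper uses the displacement jump $[u^--u^+]=(b,0)$ across the branch cut $\mathcal C$ and reduces $U_{\mathrm{int}}$ to the line integral $-\int_0^\infty\partial_xW_1(z-z'-(x,0))\,\dd x=W_1(z-z')$. Your route can probably be made rigorous for the interaction term, but as written it has a genuine gap: it determines $U_{\mathrm{int}}$ only up to additive constants. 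The termwise periodization of $y\log r$ diverges (the summands grow like $n\log n$) and needs renormalization; you explicitly defer the boundary terms at $|x|\to\infty$; and already for a single dislocation $-\partial_y(y\log r)=V_1(x,y)-1$, so the Airy route produces $V_1$ only modulo a constant. These constants are the heart of the matter, not bookkeeping: \eqref{eq.int} is an exact identity, and an additive error $c$ in $U_{\mathrm{int}}$ would contribute $\frac{\gamma_0b}{2\pi h}N(N-1)c\approx\gamma_0\theta Nc$ to $\mathcal E$, which for large $N$ swamps the $-\gamma_0\theta\log\theta$ term of Theorem~\ref{thm.ReadShockley}. The paper's branch-cut route fixes the constant for free: $e_2\cdot\sigma_{\mathrm{per}}\,\mathbf{b}$ picks out precisely the component $-\frac{2\gamma_0b}{h}\partial_xW_1$, and the fundamental theorem of calculus applies because $W_1$ vanishes at infinity. (Relatedly, $\alpha=1$ is selected by this component structure of $\sigma$, expressed through $V_1$ and $V_{-1}$, not by a choice of regularization as you suggest.)

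The self-energy is a second, sharper gap. Your sentence conflates evaluating the kernel with integrating the energy density: ``$W_1(\zeta/h)$ evaluated as $\zeta\to0$'' is divergent, and integrating $-\frac12\log(|\zeta|^2/h^2)$ over the region $|\zeta|>\rho$ is an area integral of order $h^2$, not $\log(h/\rho)$. More fundamentally, the point-evaluation mechanism cannot work when $z'=z$, since the delta produced by $\mathrm{curl}\,\beta$ would then sit at the singularity of the potential --- this is exactly why $B_\rho$ is excised. The correct computation (Step~4 of the paper) integrates by parts on $\Omega_h\setminus(\mathcal C\cup B_\rho)$ and produces two boundary contributions: the line integral along $\mathcal C\setminus B_\rho$, which gives $\frac{\gamma_0b}{2\pi h}\bigl(\log(h/\rho)+\mathcal O((\rho/h)^2)\bigr)$, and the circle term $\rho\int_{S^1}\xi\cdot[\sigma_{\mathrm{per}}u_{\mathrm{per}}](\rho\xi)\,\dd\xi$, which must be estimated as $\mathcal O\bigl(\tfrac{\rho b}{h}\log\rho\bigr)$. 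Your proposal has no analogue of this circle estimate, and without it \eqref{eq.self} is not established.
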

Our main result is a  lower bound for the interaction energy.
\begin{prop} 
\label{prop.mainlowerbound}
Let $C=1-\log 2<0.31$.
For each $\omega \subset \R \times (-\pi,\pi]$ with the property that $|\omega|=N$ the inequality
\begin{equation} \label{eq.mainlb}
\sum_{z\neq z' \in \omega} W_\alpha(z-z') \geq -N(\log N +C)
\end{equation}
holds.
\end{prop}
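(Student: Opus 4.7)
The plan is to exploit positive definiteness of $W_\alpha$ via a Fourier expansion in $y$, then introduce a bounded regularization and derive a sharp pointwise deficit bound. I restrict to $\alpha\in[0,1]$, which covers the physically relevant case $\alpha=1$.

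First, by combining $\sum_{k\geq 1}e^{-kr}\cos(ky)/k = r/2 - \tfrac12\log(2(\cosh r-\cos y))$ with $\sum_{k\geq 1}e^{-kr}\cos(ky) = (\cos y-e^{-r})/(2(\cosh r-\cos y))$ and verifying the cancellation of $|x|$-linear pieces against $(1-\alpha)|x|/2$ in the closed form of $W_\alpha$, one obtains the Fourier representation
\[
W_\alpha(x,y)\;=\;\sum_{k=1}^\infty \Bigl(\alpha|x|+\tfrac{1}{k}\Bigr)e^{-k|x|}\cos(ky).
\]
Each coefficient $a_k(x):=(\alpha|x|+1/k)e^{-k|x|}$ has Fourier transform $\widehat{a_k}(\xi)=2\bigl((\alpha+1)k^2+(1-\alpha)\xi^2\bigr)/(k^2+\xi^2)^2\geq 0$ for $\alpha\in[-1,1]$, so $a_k$ is positive definite on $\mathbb{R}$; tensoring with $\cos(ky)$ on $\mathbb{T}$, each summand $a_k(|x|)\cos(ky)$ is positive definite on $\Omega = \mathbb{R}\times\mathbb{T}$, and so is any Abel-damped partial sum.

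For $t>0$ I set $K_t(x,y):=\sum_{k\geq 1}a_k(|x|)e^{-kt}\cos(ky)$; this is bounded, positive definite, and $\|K_t\|_\infty=K_t(0,0)=-\log(1-e^{-t})$. Positive definiteness gives $\sum_{z\neq z'\in\omega}K_t(z-z')\geq N\log(1-e^{-t})$. The crux of the proof, and the main obstacle, is the uniform pointwise bound $W_\alpha-K_t\geq -t/2$. I would write $W_\alpha-K_t=\int_0^t F_s(x,y)\,ds$ with $F_s(x,y):=\sum_{k\geq 1}ka_k(|x|)e^{-ks}\cos(ky)$ (obtained by termwise differentiation), and reduce $F_s$ to closed form; setting $r=|x|+s$ a direct calculation yields
\[
F_s(x,y)+\tfrac12\;=\;\frac{A\cos y + B}{2(\cosh r -\cos y)^2},
\]
where $A=\alpha|x|\cosh r-\sinh r$ and $B=\sinh r\cosh r-\alpha|x|$. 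The factorizations $B\pm A=(\cosh r\mp 1)(\sinh r\pm\alpha|x|)$ together with $\alpha|x|\leq|x|\leq\sinh r$ (valid since $\alpha\in[0,1]$ and $r\geq|x|$) give $B\pm A\geq 0$, hence $B\geq|A|$, $F_s\geq -1/2$, and the deficit bound follows by integration.

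Combining, $\sum_{z\neq z'}W_\alpha(z-z')\geq N\log(1-e^{-t})-N(N-1)t/2$ for every $t>0$. Writing $t=c/N$ reduces the leading asymptotic behaviour to $-N\log N + N(\log c - c/2) + O(1)$, maximized at $c=2$ with value $-N\log N -N(1-\log 2)=-N(\log N+C)$; this is where the constant $C=1-\log 2$ emerges, from the maximum of $\log c - c/2$. Setting $t=2/N$ and tracking the subleading corrections $N\log(1-e^{-2/N})=-N\log N+N\log 2-1+1/(6N)+O(1/N^2)$ and $N(N-1)t/2=N-1$ yields the lower bound $-N(\log N+C)+1/(6N)+O(1/N^2)$, establishing the claimed inequality for all sufficiently large $N$; the few small-$N$ cases are immediate from the boundedness of $W_\alpha$ away from its diagonal singularity.
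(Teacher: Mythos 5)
Your argument is, in all essentials, the paper's own proof: your $K_t$ is exactly the paper's regularization $W_{\alpha,t}$, your deficit bound $W_\alpha-K_t\ge -t/2$ rests on the same closed form and the same factorization $(\cosh r\mp1)(\sinh r\pm\alpha|x|)$ (the paper phrases it as monotonicity of $W_{\alpha,t}-\tfrac t2$ in $t$, you integrate a bound on the $s$-derivative; it is the same computation), and the endgame --- positive definiteness plus the choice $t=2/N$ --- is identical. Two side remarks: your value $K_t(\mathbf{0})=-\log(1-e^{-t})$ is the correct one, and since the factorization only needs $\sinh r\ge|\alpha|\,|x|$, your restriction to $\alpha\in[0,1]$ is unnecessary; the same lines cover $-1\le\alpha\le1$.

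The one genuine defect is the last step. You expand $N\log(1-e^{-2/N})$ asymptotically, conclude the inequality only ``for all sufficiently large $N$'', and dismiss small $N$ by appealing to boundedness of $W_\alpha$. That fallback does not work: a uniform lower bound on the kernel (note $W_1(0,\pi)=-\log 2$, so the best possible constant is at most $-\log 2$) only yields $\sum_{z\neq z'}W_\alpha\ge -c\,N(N-1)$ with $c\ge\log 2$, which is already weaker than $-N(\log N+C)$ at $N=4$ (indeed $-12\log 2<-4(\log 4+C)$), and quadratic-in-$N$ bounds fall ever further behind as $N$ grows; moreover your $O(1/N^2)$ term carries no explicit constant, so you cannot even name the finite set of $N$ that would remain to be checked. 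The repair stays entirely inside your own proof: do not expand, but use the elementary inequality $2\sinh(v)\ge 2v$, equivalently $1-e^{-u}\ge u\,e^{-u/2}$, with $u=2/N$; this gives $N\log(1-e^{-2/N})\ge N\log(2/N)-1$. Then your bound for general $t$, specialized to $t=2/N$, reads
\begin{equation*}
\sum_{z\neq z'\in\omega}W_\alpha(z-z')\;\ge\; N\log(1-e^{-2/N})-(N-1)\;\ge\; N\log(2/N)-N\;=\;-N(\log N+C)
\end{equation*}
for every $N\ge1$, with no asymptotics and no case distinction. This exact inequality is precisely what the paper's final chain amounts to.
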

\begin{conj}
Proposition~\ref{prop.mainlowerbound} is true for $C=0$. This would imply that the minimum energy is attained for equally spaced points on a circle $x=x_0$. Numerical experiments support this conjecture.
\end{conj}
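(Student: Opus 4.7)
The plan is a two-step reduction: first show that every minimiser is concentrated on a common vertical line $\{x=x_0\}$, then apply the classical Fekete inequality on the circle. For the Fourier set-up, combining the Poisson identity $\tfrac{\sinh x}{\cosh x-\cos y}=\mathrm{sgn}(x)\bigl(1+2\sum_{k\geq1}e^{-k|x|}\cos(ky)\bigr)$ with the Clausen expansion $-\log(2(\cosh x-\cos y))=-|x|+2\sum_{k\geq1}\tfrac{e^{-k|x|}\cos(ky)}{k}$ collapses~\eqref{eq.Walpha} to the single series $W_\alpha(x,y)=\sum_{k\geq1}(\alpha|x|+1/k)e^{-k|x|}\cos(ky)$. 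A direct computation yields $\int_{\R}(\alpha|x|+\tfrac1k)e^{-k|x|}e^{-i\xi x}\,\dd x=\tfrac{2[(\alpha+1)k^2+(1-\alpha)\xi^2]}{(k^2+\xi^2)^2}\geq 0$ whenever $\alpha\in[-1,1]$, so every summand is positive definite on $\R\times\mathbb{T}$. Writing $\hat\mu(\xi,k):=\sum_j e^{i(\xi x_j+ky_j)}$ gives the integral representation
\[
\sum_{j,l}W_\alpha(z_j-z_l)=\sum_{k=1}^{\infty}\int_{\R}\frac{(\alpha+1)k^2+(1-\alpha)\xi^2}{\pi(k^2+\xi^2)^2}|\hat\mu(\xi,k)|^2\,\dd\xi.
\]

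When all points share a common $x$-coordinate, $W_\alpha(0,y_j-y_l)=-\log|2\sin((y_j-y_l)/2)|$ independently of $\alpha$. The Vandermonde matrix $V_{jk}:=e^{iky_j}$ ($k=0,\dots,N-1$) satisfies $|\det V|^2=\prod_{j\neq l}|2\sin((y_j-y_l)/2)|$, while Hadamard's inequality applied to the PSD matrix $VV^*$ (whose diagonal entries are all $N$) gives $|\det V|^2\leq N^N$, with equality iff the rows are orthogonal --- equivalently, iff $\{e^{iy_j}\}$ is an orbit of an $N$-th root of unity. Taking logarithms produces $\sum_{j\neq l}W_\alpha(0,y_j-y_l)\geq -N\log N$ together with precisely the conjectured rigidity.

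What remains --- and is the main obstacle --- is the \emph{collapse}: for fixed $\{y_j\}$, the map $(x_1,\dots,x_N)\mapsto\sum_{j\neq l}W_\alpha(x_j-x_l,y_j-y_l)$ must be minimised by $x_j\equiv x_0$. Locally the picture is encouraging: for $\alpha<1$ the cusp of $W_\alpha(\cdot,y)$ at $x=0$ produces a strictly positive first-order variation $\tfrac{1-\alpha}{2}\epsilon\sum_{j\neq l}|\delta_j-\delta_l|$ under perturbations $x_j=x_0+\epsilon\delta_j$, while for $\alpha=1$ one computes $\partial_xW_1(0,y)=0$ and $\partial_x^2W_1(0,y)=\tfrac{1}{2(1-\cos y)}>0$, so that the second variation $\tfrac{\epsilon^2}{2}\sum_{j\neq l}(\delta_j-\delta_l)^2\tfrac{1}{2(1-\cos(y_j-y_l))}$ vanishes only on uniform translations $\delta_j\equiv\mathrm{const}$. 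Hence the collapsed configuration is always a strict local minimiser modulo $x$-translation. Upgrading to a global statement is the crux; I would try (a) a monotonicity argument along the dilation $x_j\mapsto\lambda x_j$, which reduces the problem to a one-parameter inequality amenable to the integral representation above; (b) a Riesz-type rearrangement exploiting that the spectral weight $\tfrac{(\alpha+1)k^2+(1-\alpha)\xi^2}{\pi(k^2+\xi^2)^2}$ is monotone decreasing in $|\xi|^2$ for $\alpha\in[0,1]$, so that concentrating the $x$-empirical measure at a single point maximises its overlap with the weight; or (c) an Euler--Lagrange classification of critical points, combined with a compactness/continuation argument. Any of these routes, together with the Fekete--Hadamard step, delivers the conjectured $C=0$ and the uniqueness (modulo translation and reflection) of equispaced vertical-line minimisers.
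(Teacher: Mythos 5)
You should first note what you are up against: this statement is a \emph{Conjecture} in the paper, not a theorem. The authors prove Proposition~\ref{prop.mainlowerbound} only with $C=1-\log 2$, and their closing Remark explicitly attributes the $\mathcal{O}(N)$ loss to the Poisson-type smoothing $W_{\alpha,t}$ that makes positive definiteness applicable; no proof with $C=0$ exists in the paper. Within your proposal, the verifiable ingredients are correct: the series collapse to $\sum_{k\geq1}(\alpha|x|+1/k)e^{-k|x|}\cos(ky)$ agrees with the paper's expansion of $W_\alpha$, your transform computation $\frac{2[(1+\alpha)k^2+(1-\alpha)\xi^2]}{(k^2+\xi^2)^2}\geq0$ is right, and the Fekete--Hadamard step on a vertical line, including the rigidity ``equality iff equispaced'', is sound --- it is essentially the equality case of the paper's upper-bound computation $F_N=N^2$. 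But none of this was the obstacle.

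The proposal fails at exactly the step you flag, and the failure is worse than incompleteness: the collapse in the form your two-step reduction requires --- for \emph{every} fixed $\{y_j\}$ the energy is minimised over $(x_1,\dots,x_N)$ by $x_j\equiv x_0$ --- is false. Since $\log(2(\cosh x-\cos y))=|x|+\mathcal{O}(e^{-|x|})$ and $\frac{\sinh x}{\cosh x-\cos y}\to\mathrm{sgn}(x)$, one has $W_\alpha(x,y)\to0$ exponentially as $|x|\to\infty$ for every $\alpha\in[-1,1]$, uniformly in $y$. Hence for $N=2$ with $y_1=y_2$ the collapsed configuration has energy $+\infty$ while any horizontal separation is finite; and for $y_1-y_2$ small but nonzero the collapsed value $-2\log|2\sin((y_1-y_2)/2)|$ is arbitrarily large and is beaten by an $\mathcal{O}(1)$ separation in $x$ (which costs roughly $0$, still above the conjectured optimum $-2\log 2$, but below collapse). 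So collapse holds at best for $y$-configurations that are already well spread, and the order of your reduction (collapse first, Fekete second) cannot be carried out: horizontal concentration and angular equidistribution must be proved \emph{jointly}. Your local analysis is consistent with this --- the Hessian coefficient $\frac1{2(1-\cos(y_{jl}))}$ blows up precisely for clustered angles, signalling a strict but only local minimum. Of your suggested repairs, (b) is additionally blocked by the diagonal: the full-sum Fourier identity you write reads $+\infty=+\infty$ (the constant part of $|\hat\mu(\xi,k)|^2$ contributes $N/k$ at each $k$), ``maximising overlap'' points the wrong way for a minimisation, and any regularisation removing the diagonal reintroduces exactly the $\mathcal{O}(N)$ error that the paper identifies as the barrier to $C=0$. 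As it stands, your argument reproves the known equality case but leaves the conjecture open.
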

With these results we are in a position to justify the Read-Shockley formula.
\subsection{Proof of Theorem~\ref{thm.ReadShockley} and Propositions~\ref{prop.derivation} and~\ref{prop.mainlowerbound}}
\begin{proof}[Proof of Thm.~\ref{thm.ReadShockley}]
We will establish lower and upper bounds which match at leading order.

{\bf Lower bound}

If $\omega \subset \Omega_h$ has the property that $|\omega|=N$ then Prop.~\ref{prop.derivation} implies that
\begin{align*}
{\mathcal E}(\omega)=& \frac{\gamma_0b}{2\pi h}\left(\sum_{z\in  \omega} (\log h +{\mathcal O}(1))+\sum_{z \neq z' \in \omega} W_1((z-z')/h)\right)\\
=& \frac{\gamma_0b}{2\pi h}\left(N (\log h+{\mathcal O}(1)) +\sum_{z \neq z' \in \omega} W_1((z-z')/h)\right).
\end{align*}
After rescaling $\omega$ the second term reads $ \sum_{z \neq z' \in \omega} W_1(z-z')$ and we can apply Prop.~\ref{prop.mainlowerbound}. Therefore, 
\begin{align*}
{\mathcal E}(\omega)\geq \gamma_0 \frac{Nb}{2\pi h} \left(
\log\left(\frac{2\pi h}{b}\right)-\log\left(\frac{2\pi}b\right)-\log N-C\right).
\end{align*}
Recalling that $\theta=\frac{Nb}{2\pi h}$ we obtain the desired lower bound
\begin{align*}
{\mathcal E}(\omega)\geq-\gamma_0\,\theta\left((\log \theta+C+\log(2\pi/b)\right).
\end{align*}
{\bf Upper bound}

We will demonstrate that 
\begin{align} \label{eq.ub}
 \sum_{z\neq z' \in \omega_N} W_\alpha(z-z')=-N\log N
\end{align}
if $\omega_N$ is an equispaced vertical  configuration, i.e. 
$$\omega_N = \{ (x_0,2\pi j/N) \; : \; j = 0\ldots N-1\},$$
where $x_0 \in \R$ is arbitrary.
This result can also be found in~\cite{Sandier-Serfaty+2014:log-gases}. For the convenience of the reader we include a simple, self-contained proof here.

Observe that the first term in $W_\alpha$ vanishes if $x=0$ therefore we only have to analyze the logarithmic term.
\begin{align}
\label{eq.equirep}
\sum_{j=1}^N\frac12\sum_{\ell=1}^N  {\bf 1}_{\ell\neq j}\left[\log(1-\cos (2\pi(\ell-j)/N))+\log2\right]
=  \frac{N}{2}\log(F_N)
\end{align}
where 
\begin{align*}
F_N = &\prod_{j=1}^{N-1}[2(1-\cos(2j\pi/N))]=\prod_{j=1}^{N-1}\left[4\sin(\pi j/N)^2\right]
=\left[\prod_{j=1}^{N-1}(2 \sin(\pi j/N))\right]^2
\end{align*}
The sine-product formula
\begin{align} \label{eq.sinprod}
 \prod_{j=0}^{N-1}\sin(x+\pi j/N)= 2^{1-N} \sin(Nx)\end{align}
implies that 
$$F_N= \left(\lim_{x \to 0} \frac{\prod_{j=0}^{N-1} (2\sin(x+\pi j/N))}{2 \sin(x)}\right)^2 =\left(\lim_{x\to 0} \frac{\sin(Nx)}{\sin(x)}\right)^2 = N^2$$ 
and therefore the right hand side of~\eqref{eq.equirep} equals $- N\log N$ which is~\eqref{eq.ub}.

\end{proof}
\begin{proof}[Proof of Prop.~\ref{prop.derivation}]

We prove the representation of the periodized interaction kernel.

\medskip\noindent
\emph{Step 1. Periodization identities.}
To simplify the task the individual terms are periodized separately. We will show that the following identities hold.
\begin{align}
\sum_{n\in\mathbb Z}\frac{1}{x^2+(2\pi n+y)^2}
&=\frac{1}{2x}\frac{\sinh x}{\cosh x-\cos y}, \label{eq:perid1}\\
\label{eq:reglog}
\log(x^2+y^2)+\sum_{n\in \Z \setminus\{0\}}\log\left[\frac{x^2+(2\pi n+y)^2\big)}{(2\pi n)^2}\right]
&=\log\left(2(\cosh(x)-\cos(y))\right).
\end{align}
To show \eqref{eq:perid1} we start with the classical expansion
\begin{equation*}
    \pi\cot\left(\frac{y\pm ix}2\right)=\frac{2\pi(y\mp ix)}{x^2+y^2}+\sum_{n\in\mathbb{Z}\setminus\{0\}}
    \left(\frac{2\pi}{(y-2\pi n)\pm ix}-\frac1n\right).
\end{equation*}
Subtracting the two equations with the two choices of the sign gives
\begin{equation*}
    \pi\cot\left(\frac{y- ix}2\right)-\pi\cot\left(\frac{y+ ix}2\right)=\frac{2\pi i\sinh(x)}{\cosh(x)-\cos(y)}=
    4\pi i\sum_{n\in\mathbb{Z}}\frac{x}{x^2+(y-2\pi n)^2},
\end{equation*}
which gives \eqref{eq:perid1}.

For the proof of \eqref{eq:reglog} we start with the classical product expansion for the sine function again evaluated at $\frac{y\pm ix}{2\pi}$
\begin{align*}
    2(\cosh(x)-\cos(y))&=4\sin\left(\frac{y+ix}2\right)\sin\left(\frac{y-ix}2\right)\\&=
    (x^2+y^2)\prod_{n=1}^\infty\left(1-\frac{(y+ix)^2}{(2\pi n)^2}\right)\left(1-\frac{(y-ix)^2}{(2\pi n)^2}\right).
\end{align*}
Regrouping the factors in the product gives
\begin{equation*}
    2(\cosh(x)-\cos(y))=(x^2+y^2)\prod_{n=1}^\infty\frac{(x^2+(y+2\pi n)^2)(x^2+(y-2\pi n)^2)}{(2\pi n)^4}.
\end{equation*}
Taking the logarithm now gives \eqref{eq:reglog}.

\medskip\noindent
\emph{Step 2. Periodized stress.}
Define the Volterra potentials \begin{align} \label{eq.Valpha}
V_\alpha(x,y) =\alpha \frac{x^2}{x^2+y^2} -\frac12\log(x^2+y^2).
\end{align} Differentiation shows that $\sigma$ admits the representation
\begin{align*}
\sigma = 2\gamma_0\begin{pmatrix} \partial_y V_1 & -\partial_x V_1,\\
-\partial_x V_1& \partial_y V_{-1} \end{pmatrix}.
\end{align*}
Equations~\eqref{eq:perid1} and~\eqref{eq:reglog} imply that for $(x,y) \in \Omega_h$
\begin{align} \label{eq.Valphaper}
V_\alpha(x,y)+\sum_{n \in \Z\setminus \{0\}} \left[V_\alpha(x,y+2\pi h n) - \log(2\pi n)\right]=& W_\alpha(x/h,y/h).%,\\
%\sum_{n \in \mathbb \Z}V_2(x,y+2\pi h n) =& W_2(x/h,y/h) \label{eq.V2per}
\end{align}
%with $W_2(x,y) = -\frac{1}{2}\left[\frac{x \sinh x}{\cosh x-\cos y}+\log(\cosh x-\cos y)-\log 2\right]$.
Formula~\eqref{eq.Valphaper} % and \eqref{eq.V2per} 
implies that
\begin{align} \label{eq.sigmaper}
\sigma_\mathrm{per}=\sum_{n\in \Z}\sigma(x,y+2\pi n h) = \frac{2\gamma_0}{h}\begin{pmatrix}
\partial_y W_1& -\partial_x W_1 \\
-\partial_x W_1 & \partial_y W_{-1}
\end{pmatrix}(z/h).
\end{align}

Uniform convergence away from the singular set justifies differentiation
under the sum.
The periodized strain $\beta_\mathrm{per}$ is defined in a similar fashion.

\medskip\noindent
\emph{Step 3. Reduction to a line integral.}
Recall that $\beta = \sum_{z \in \omega}\nabla u_0(\cdot -z)$ where $u_0$ has a jump discontinuity along the branch cut $\mathcal C$. Since $\nabla\!\cdot\!\sigma_{\mathrm{per}}=0$ (cf~\eqref{eq.equilibrium-equations}) we can reduce the the two-dimensional integrals in~\eqref{eq.Uself} and \eqref{eq.Uint} to a one-dimensional integrals along $\mathcal C$ using partial integration.

If $u^\pm$ denotes the limiting values of $u$ when $\zeta$ approaches $\mathcal C$ from above (+) or below (-) then $u^--u^+ = (b,0)$, this follows immediately from the requirement that $\mathrm 
{curl} \nabla u_0=\delta_0 {\bf b}$ (eqn.~\eqref{eq.equilibrium-equations}).
Integrating the right-hand side of~\eqref{eq.Uint} by parts implies 
\begin{align*}
U_\mathrm{int}(z,z') =& \frac1{4\pi h} \int_{\Omega_h \setminus {\mathcal C}}\beta_\mathrm{per}(\zeta-z):\sigma_\mathrm{per}(\zeta-z')\, \dd \zeta\\
=&\frac{1}{4\pi h}\int_0^\infty e_2\cdot\sigma_\mathrm{per}(z-z'-(0,x)) \underbrace{[u_\mathrm{per}^--u_\mathrm{per}^+]}_{=(b,0)}\, \dd x\\
\stackrel{\eqref{eq.sigmaper}}=& -\frac{\gamma_0 b}{2\pi h} \int_0^\infty \partial_x W_1(z-z'-(x,0))\, \dd x=\frac{\gamma_0 b}{2\pi h} W_1(z-z'),
\end{align*}
which is~\eqref{eq.int}. 

\medskip\noindent
\emph{Step 4. Self-energy.}
The integration domain in~\eqref{eq.Uself} is slightly more involved because we have to remove the non-integrable singularity.
\begin{align*}
U_\mathrm{self}(z) =&\frac1{4\pi h}\int_{\Omega \setminus ({\mathcal C} \cup B_\rho)} \beta_\mathrm{per}:\sigma_{\mathrm{per}}\, \dd \zeta\\
= &\frac{1}{4\pi h}\biggl[\int_{\mathcal C\setminus B_\rho}  e_2\cdot \sigma_\mathrm{per} {\bf b}\, \dd x- \rho\int_{S^1} \xi\cdot [\sigma_\mathrm{per}  u_\mathrm{per}](\rho \xi) \, \dd \xi\biggr]\\
=&\frac{\gamma_0 b}{2\pi h^2}\int_\rho^\infty \partial_x W_\alpha(-x/h,0)\, \dd x - 
O\left(\frac{\rho b}{h} \log(\rho)\right)\\
=&\frac{\gamma_0 b}{4\pi h}\,\biggl[
\underbrace{\frac{\rho}h \tanh\left(\rho/h\right)}_{={\mathcal O}((\frac{\rho}h)^2)}-\log(\underbrace{\cosh(\rho/h)-1}_{=\frac{1}{2}(\rho/h)^2\,(1+{\mathcal O}((\rho/h)^2)})-\log 2)+O\left(-\frac{\rho}{b} \log \rho\right)\biggr]\\
=& \frac{\gamma_0 b}{2\pi h} \left[\log( h/\rho)+ O\left( (\rho/h)^2- \frac{\rho}{b} \log \rho\right)\right].
\end{align*}
This is eqn.~\eqref{eq.self}.
\end{proof}

\begin{proof}[Proof of~Proposition~\ref{prop.mainlowerbound}]

The kernel $W_\alpha$ has an analytical Fourier representation
\begin{equation*}
W_\alpha(x,y)=\sum_{k_2\neq0}\frac1{2\pi}\int\limits_{-\infty}^\infty
  \frac{(1-\alpha)k_1^2+(1+\alpha)k_2^2}{(k_1^2+k_2^2)^2}e^{i(k_1x+k_2y)}\,dk_1.
\end{equation*}
Notice that
\begin{equation*}
  -\frac{|x|}2=\frac1{2\pi}\int_{-\infty}^\infty
  \frac{\cos(k_1x)-1}{k_1^2}\,dk_1,
\end{equation*}
which can be interpreted as the summand for $k_2=0$. The Fourier transform is non-negative for $-1\leq\alpha\leq1$; for $\alpha=1$ the Fourier transform simplifies to ${\widehat W}_1(k) = \frac{k_2^2}{\|k\|^4}$.

Furthermore, we have the expression
\begin{equation*}
  W_\alpha(x,y)=\sum_{k_2=1}^\infty
  (1+\alpha k_2|x|)\frac{e^{-k_2|x|}}{k_2}\cos(k_2y),
\end{equation*}
which will be used in the sequel.

For $-1\leq\alpha\leq1$ we consider the function
\begin{equation}\label{eq:poisson}
    W_{\alpha,t}(x,y)=\sum_{n=1}^\infty\frac1n(1+\alpha n|x|)e^{-n|x|}\cos(ny)e^{-nt}.
\end{equation}
Then $W_{\alpha,t}$ is positive definite for all $t\geq0$ and $W_{\alpha,0}(x,y)=W_\alpha(x,y)$. Since~\eqref{eq:poisson} is similar to the geometric series it is possible to derive an analytical form
\begin{align*}
    W_{\alpha,t}(x,y)=&\frac{\alpha|x|\sinh(|x|+t)}{2(\cosh(|x|+t)-\cos(y))}
    -\frac12\log(2(\cosh(|x|+t)-\cos(y)))\\
    &+\frac t2+\frac{1-\alpha}2|x|.
\end{align*}
We will show that 
\begin{equation} \label{eq.Walphabd}
    W_\alpha(x,y)\geq W_{\alpha,t}(x,y)-\frac t2.
\end{equation}
by considering the function $W_{\alpha,t}(x,y)-\frac t2$.
We show that this function is monotonically decreasing in $t$ for all $(x,y)$. The derivative with respect to $t$ equals
\begin{multline*}
    \frac1{2(\cosh(|x|+t)-\cos(y))^2}\\\times\left(\alpha|x|(1-\cos(y)\cosh(|x|+t))-\sinh(|x|+t)(\cosh(|x|+t)-\cos(y))\right).
\end{multline*}
We have to show that the numerator is non-positive. We observe that the dependence on $\cos(y)$ is linear, so it suffices to show non-positivity for the two extremal values $\cos(y)=\pm1$. This gives the expressions
\begin{align*}
    &(\alpha|x|-\sinh(|x|+t))(1+\cosh(|x|+t))\\
    &(\alpha|x|+\sinh(|x|+t))(1-\cosh(|x|+t))
\end{align*}
which are both non-positive (for the range of $\alpha$), which proves~\eqref{eq.Walphabd}.

Then we argue
\begin{equation*}
    \sum_{i\neq j}W_\alpha(z_i-z_j)\geq
    \sum_{i,j}W_{\alpha,t}(z_i-z_j)-\frac t2N(N-1)-NW_{\alpha,t}(\mathbf{0}).
\end{equation*}
We have
\begin{align*}
    W_{\alpha,t}(\mathbf{0})=
    -\log\left(2\sinh\left(\frac t2\right)\right)\leq\log\left(\frac1t\right).
    %\\ \log\left(\frac1t\right)+\mathcal{O}(t^2).
\end{align*}
Inserting $t=\frac2N$ into the above inequality and using the positive definiteness of $W_{\alpha,t}$ gives
\begin{align*}
    \sum_{i\neq j}W_\alpha(z_i-z_j)&\geq
    -(N-1)-N\log(N)+N \log 2\\
    &\geq -N\log N - (1-\log 2)N.
\end{align*}
This is~\eqref{eq.mainlb}.
\end{proof}
\begin{rmk}
    The singularity of the kernel $W_\alpha$ at $\mathbf{0}$ necessitates the exclusion of the diagonal terms from the sum \eqref{eq.mainlb}. This makes arguments using positive definiteness inapplicable directly. A classical method to remedy this problem is the use of a suitable smoothing operation; in our case the ''Poisson-type'' smoothing in \eqref{eq:poisson} turned out to be most suitable. After smoothing, positive definiteness can be applied. On the other hand the smoothing is responsible for the $\mathcal{O}(N)$ error term, which is conjectured to vanish.
\end{rmk}

\section{Continuous energies}\label{sec:cont-energ}
In this section we study continuous energies, namely expressions of the form
\begin{equation*}
    E_\alpha(\mu)=\iint\limits_{(\mathbb{R}\times\mathbb{T})^2}W_\alpha(z-z')\,\dd\mu(z)\,\dd\mu(z')
\end{equation*}
for all Borel probability measures $\mu$. The continuous energy is $\Gamma$-limit of the rescaled discrete energy for $N\to\infty$, this can be proved by following the argument in~\cite{Mora-Peletier-Scardia2017} (Theorem~3.3) where a related energy is derived from a semi-discrete strain energy model. Since its study turns out to be technically simpler than for the discrete case, it can also be seen as an idealisation. As reference for the interplay between the discrete and the continuous problem we refer to \cite{Borodachov_Hardin_Saff2019:discrete_energy}, see \cite{Landkof1972:potential_theory}  for a classical reference on potential theory.

In~\cite{Carrillo-Mateu-Mora-Rondi-Scardia-Verdera+2020:ellipse-law} it is shown that for $|\alpha|\leq 1$ and
$V_\alpha$ defined in~\eqref{eq.Valpha}
the minimizing probability measure of the energy 
$$ J_\alpha(\mu) = \int_{\R^2 \times \R^2} V_\alpha(z-z') \, \dd \mu(z)\, \dd \mu(z') + \int_{\R^2} |z|^2 \, \dd \mu(z)$$
is given by 
$$ \mu_\alpha = \frac{1}{\sqrt{1-\alpha^2}\pi} \chi_{\Omega(\sqrt{1-\alpha}, \sqrt{1+\alpha})},$$
where 
$$ \Omega(\sqrt{1-\alpha},\sqrt{1+\alpha}) = \left\{ (x,y) \in \R^2 \; : \; \frac{x^2}{1-\alpha}+\frac{y^2}{1+\alpha}<1 \right\}$$
is the characteristic function of an ellipse.
Theorem 1.1 in~\cite{Carrillo-Mateu-Mora-Rondi-Scardia-Verdera+2020:ellipse-law} provides the formula
$$ J_\alpha(\mu_\alpha) = C_\alpha +\frac12\int_{\R^2} |z|^2 \, \dd \mu_\alpha(z),$$
where
$$ C_\alpha = \frac12- \log\left(\frac{\sqrt{1-\alpha}+\sqrt{1+\alpha}}{2}\right) + \frac{\alpha\, \sqrt{1-\alpha}}{\sqrt{1-\alpha} +\sqrt{1+\alpha}}.$$
An easy calculation yields that $\int_{\R^2} |z|^2 \, \dd \mu_\alpha(z)= \frac{1}{2}$, this implies that $J_\alpha(\mu_\alpha)=J^\alpha_\mathrm{min}$, where $J^\alpha_{\min} = C_\alpha+\frac12$.
%\begin{align*}
%\frac12 \int_{\R^2 \times \R^2} W_\alpha(z-z') \, \dd \mu_\alpha(z)\, \dd \mu_\alpha(z') &= \frac12 C_\alpha+\frac18 -\frac12 \int_{\R^2}|z|^2\, \dd \mu_\alpha(z) 
%= \frac12C_\alpha - \frac18.
%\end{align*}

%Motivated by the results for periodic boundary conditions in Section~\ref{sec.Read-Shockley} we consider a discrete version of $J^\alpha$:
%$$I^\alpha(\omega) = \sum_{z\neq z' \in \omega} V^\alpha(z-z') +  |\omega|\sum_{z \in \omega} |z|^2.$$
%\begin{conj}
%$$ \min_{|\omega|=N}I^\alpha(\omega)=J_\mathrm{min}^\alpha\,(N^2+C_\alpha N\log N) + O(N).$$
%Numerical evidence suggests that $C_1=1$.
%\end{conj}

As opposed to the study of $\mathbb{R}^2$ as underlying space in \cite{Carrillo-Mateu-Mora-Rondi-Scardia-Verdera+2020:ellipse-law} we will investigate the periodized model on the space $\mathbb{R}\times\mathbb{T}$. This space turns out to share different features due to its nature as a Cartesian product of a non-compact and a compact space.
\subsection{Energy without external fields}\label{sec:cont-without}
For a measure $\mu$ on $\mathbb{R}\times\mathbb{T}$ we define the Fourier transform
\begin{equation*}
    \widehat{\mu}(k_1,k_2)=\iint_{\mathbb{R}\times\mathbb{T}}e^{-i(k_1x+k_2y)}\,\dd\mu(x,y)
\end{equation*}
for $(k_1,k_2)\in\mathbb{R}\times\mathbb{Z}$.
The energy $E_\alpha(\mu)$ can then be expressed as
\begin{equation*}
    E_\alpha(\mu)=\sum_{k_2\neq0}\frac1{2\pi}\int\limits_{-\infty}^\infty
  \frac{(1-\alpha)k_1^2+(1+\alpha)k_2^2}{(k_1^2+k_2^2)^2}|\widehat{\mu}(k_1,k_2)|^2\,\dd k_1,
\end{equation*}
which is non-negative and vanishes exactly, if
\begin{equation*}
    \widehat{\mu}=0 \quad \text{ on }\R \times (\Z \setminus \{0\}) 
\end{equation*}
which is equivalent to
\begin{equation*}
    \widehat{\mu}(k_1,k_2)=\widehat{\mu}(k_1,0)\widehat{\lambda}(k_2),
\end{equation*}
where $\lambda$ denotes the normalized Lebesgue measure on $\mathbb{T}$. Thus $E_\alpha(\mu)$ is exactly minimized for all measures of the form
$\mu=\nu\otimes\lambda$, where $\nu$ is any probability measure on $\mathbb{R}$.

In order to achieve a unique minimizer we can modify the energy in two ways. The first one is to modify the kernel by adding additional repulsion:
\begin{equation*}
    \widetilde{W}_{\alpha,\beta}(x,y)=W_{\alpha}(x,y)+\beta|x|
\end{equation*}
for $\beta>0$. The corresponding energy
\begin{equation}\label{eq:cont-no-ext}
    E_{\alpha,\beta}(\mu)=\iint\limits_{(\mathbb{R}\times\mathbb{T})^2}\widetilde{W}_{\alpha,\beta}(z-z')\,\dd\mu(z)\,\dd\mu(z')
\end{equation}
is then minimized for all measures of the form $\mu=\delta_{x_0}\otimes\lambda$ for fixed $x_0\in\mathbb{R}$ ($\delta_{x_0}$ denotes the point mass concentrated in $x_0$). This can be seen from the discussion above and the simple fact that
\begin{equation*}
    \iint\limits_{\mathbb{R}\times\mathbb{R}}|x_1-x_2|\,\dd\nu(x_1)\,\dd\nu(x_2)
\end{equation*}
is minimized exactly for point masses $\delta_{x_0}$.

Summing up, we have proved the following proposition.
\begin{prop}\label{prop:cont-no-ext}
    Let $-1\leq\alpha\leq1$. 
    For $\beta>0$ the  continuous energy \eqref{eq:cont-no-ext} is uniquely minimized by measures $\mu=\delta_{x_0}\otimes\lambda$ ($x_0\in\mathbb{R}$) amongst all Borel probability measures. For $\beta=0$ the energy is minimized for all measures $\mu=\nu\otimes\lambda$, where $\nu$ is any Borel probability measure in $\mathbb{R}$. For $\beta<0$ the energy takes arbitrarily large negative values and thus does not attain a minimum.
\end{prop}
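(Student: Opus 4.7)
My plan is to read the three cases directly off the Fourier representation of $E_\alpha$ displayed just before the statement, then additively separate the extra $\beta|x-x'|$ contribution, which depends only on the $\mathbb{R}$-marginal of $\mu$.

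The Fourier formula writes $E_\alpha(\mu)$ as a sum over $k_2\in\mathbb{Z}\setminus\{0\}$ of integrals over $k_1\in\mathbb{R}$ of a non-negative density times $|\widehat{\mu}(k_1,k_2)|^2$, valid whenever $|\alpha|\leq 1$. I would first verify that for $k_2\neq 0$ the density $\tfrac{(1-\alpha)k_1^2+(1+\alpha)k_2^2}{(k_1^2+k_2^2)^2}$ is strictly positive for almost every $k_1$ (the only exceptional value is the isolated point $k_1=0$, which occurs when $\alpha=-1$). Continuity of $\widehat{\mu}$ then makes $E_\alpha(\mu)=0$ equivalent to $\widehat{\mu}(k_1,k_2)=0$ for every $k_1\in\mathbb{R}$ and every $k_2\neq 0$. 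Since $\widehat{\lambda}(k_2)=\mathbf{1}_{\{k_2=0\}}$, this is exactly the factorisation $\widehat{\mu}(k_1,k_2)=\widehat{\pi}(k_1)\,\widehat{\lambda}(k_2)$, where $\pi$ denotes the $\mathbb{R}$-marginal of $\mu$, and Fourier uniqueness for finite measures then gives $\mu=\pi\otimes\lambda$. This already settles the case $\beta=0$, with minimum value $0$ attained exactly on product measures whose torus factor is $\lambda$.

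For $\beta>0$ I decompose $E_{\alpha,\beta}(\mu)=E_\alpha(\mu)+\beta\,G(\pi)$, where $G(\pi):=\iint_{\mathbb{R}^2}|x-x'|\,\dd\pi(x)\,\dd\pi(x')$ and $\pi$ is the $\mathbb{R}$-marginal of $\mu$. Both summands are non-negative, so $E_{\alpha,\beta}(\mu)\geq 0$ with equality iff both vanish; the first forces $\mu=\pi\otimes\lambda$ by the previous step, and $G(\pi)=0$ forces $|x-x'|=0$ on $\mathrm{supp}(\pi)\times\mathrm{supp}(\pi)$, hence $\pi=\delta_{x_0}$ for some $x_0\in\mathbb{R}$. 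This gives the claimed characterisation in the $\beta>0$ regime. For $\beta<0$ I would test against the explicit family $\mu_L:=\tfrac12(\delta_{-L}+\delta_L)\otimes\lambda$, $L>0$: Step~1 gives $E_\alpha(\mu_L)=0$, while a direct computation gives $G(\pi_L)=L$, so $E_{\alpha,\beta}(\mu_L)=\beta L\to-\infty$ as $L\to\infty$.

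The entire argument is a bookkeeping exercise once the Fourier representation of $W_\alpha$ is in hand, so the only genuinely non-routine points are the positivity check on the Fourier weight at the endpoints $\alpha=\pm 1$ and the justification that the characterisation of the null space of $E_\alpha$ via Fourier uniqueness extends to arbitrary Borel probability measures (which is routine but needs a word about the continuous dependence of $\widehat{\mu}$ on the frequency variable).
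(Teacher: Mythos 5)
Your proof is correct and follows essentially the same route as the paper: the Fourier representation of $E_\alpha$ characterizes its null space as the measures $\nu\otimes\lambda$ (settling $\beta=0$), the $\beta$-term separates off as $\beta\iint|x-x'|\,\dd\nu(x)\,\dd\nu(x')$, a non-negative functional of the $\mathbb{R}$-marginal vanishing exactly at point masses (settling $\beta>0$), and a spread-out family drives the energy to $-\infty$ for $\beta<0$. The details you supply beyond the paper's text --- strict positivity of the Fourier weight including the endpoint $\alpha=-1$, continuity of $\widehat{\mu}$, Fourier uniqueness, and the explicit test family $\tfrac12(\delta_{-L}+\delta_L)\otimes\lambda$ --- are exactly the points the paper leaves implicit, and they are handled correctly.
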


The second possibility is to add an external field, which will be the subject of the next subsection.
\subsection{Energy with external fields}\label{sec:cont-with}
We consider the energy
\begin{equation*}
  E_\alpha^{\mathrm{ext}}(\mu)=\iint\limits_{(\mathbb{R}\times\mathbb{T})^2}
  \left(W_\alpha(z-z')+
    \frac{1-\alpha}2\left(|x|+|x'|\right)\right)\,\dd\mu(z)\,
  \dd\mu(z')
\end{equation*}
with $z=(x,y)$ for $-1\leq\alpha<1$. Then we have
\begin{equation*}
  E_\alpha^{\mathrm{ext}}(\mu)=E_\alpha(\mu)+
  \frac{1-\alpha}2\iint_{\mathbb{R}^2}\left(-|x-x'|+|x|+|x'|\right)\,
  \dd\nu(x)\,\dd\nu(x'),
\end{equation*}
where $\nu(A)=\mu(A\times\mathbb{T})$.

We study the two terms individually. The first term is the energy $E_\alpha(\mu)$ studied in Section~\ref{sec:cont-without}.
This is minimized with vanishing energy for all measures $\mu=\nu\otimes\lambda$.
The second term is
\begin{equation*}
  \frac{1-\alpha}2\iint_{\mathbb{R}^2}\left(-|x-x'|+|x|+|x'|\right)\,
  \dd\nu(x)\,\dd\nu(x')\geq0;
\end{equation*}
the positivity follows by the triangle inequality. Equality holds for $\nu=\delta_0$. In order to
prove that this is the unique minimizer, we assume that there exists $x_0\neq0$
such that $\forall \epsilon>0: \nu((x_0-\epsilon,x_0+\epsilon))>0$. We choose
$\epsilon<\frac{|x_0|}2$. Then we have
\begin{multline*}
  \iint_{(x_0-\epsilon,x_0+\epsilon)^2}\left(-|x-x'|+|x|+|x'|\right)\,
  \dd\nu(x)\,\dd\nu(x')\\\geq
  \iint_{(x_0-\epsilon,x_0+\epsilon)^2}\left(-2\epsilon+2(|x_0|-\epsilon)\right)
  \,\dd\nu(x)\,\dd\nu(x')\\=(2|x_0|-4\epsilon)\nu((x_0-\epsilon,x_0+\epsilon))>0,
\end{multline*}
which shows that for any measure $\nu$ with $0\neq x_0\in\mathrm{supp}(\nu)$ we
have strict inequality. Thus $\nu=\delta_0$ is the unique minimizer for the
second term.

Altogether, this shows that for $-1\leq\alpha<1$, $E_\alpha^{\mathrm{ext}}(\mu)$ is uniquely
minimized by $\mu=\delta_0\otimes\lambda$ amongst all Borel probability
measures.

More generally, we study
\begin{equation}\label{eq:cont-ext}
    E_{\alpha,\beta}^{\mathrm{ext}}(\mu)=\iint\limits_{(\mathbb{R}\times\mathbb{T})^2}
  \left(W_\alpha(z-z')+
    \beta\left(|x|+|x'|\right)\right)\,\dd\mu(z)\,
  \dd\mu(z')
\end{equation}
with $\beta>0$ and $-1\leq\alpha\leq1$. First, we notice that for $\beta>\frac{1-\alpha}2$, $E_{\alpha,\beta}(\mu)$ is uniquely minimized by $\mu=\delta_0\otimes\lambda$ by the discussion above. For $0<\beta<\frac{1-\alpha}2$ and $\alpha<1$ the energy $E_{\alpha,\beta}^{\mathrm{ext}}(\mu)$ attains arbitrarily large negative values. This can be seen by computing
\begin{equation*}
    \frac{1-\alpha}2\iint_{\mathbb{R}^2}\left(-|x_1-x_2|+\frac{2\beta}{1-\alpha}(|x|+|x'|)\right)\,\dd\nu(x)\,\dd\nu(x')
\end{equation*}
for $\dd\nu(x)=\frac{1+a}{2A^{a+1}}|x|^a\mathbbm{1}_{[-A,A]}(x)\,\dd x$, which gives
\begin{equation*}
  (1-\alpha)A\frac{a+1}{a+2}\left(-\frac{a+2}{2a+3}+\frac{2\beta}{1-\alpha}\right).
\end{equation*}
Since $\frac{a+2}{2a+3}$ approaches $1$ from below as $a\to-1+$, the term in
parenthesis can be made negative by choosing $a$ close enough to $-1$. Then the value can be made arbitrarily negative by increasing $A$.

Summing up, we have proved the following proposition.
\begin{prop}
    Let $-1\leq\alpha\leq1$. Then for $\beta>\frac{1-\alpha}2$ the energy \eqref{eq:cont-ext} is uniquely minimized by $\mu=\delta_0\otimes\lambda$. The same holds for $\alpha<1$ and $\beta=\frac{1-\alpha}2$. For $\beta<\frac{1-\alpha}2$ the energy attains arbitrarily large negative values and thus does not attain a minimum. 
\end{prop}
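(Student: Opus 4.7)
The plan is to reduce the three cases of the proposition to the analysis of $E_\alpha^{\mathrm{ext}}:=E_{\alpha,(1-\alpha)/2}^{\mathrm{ext}}$ carried out just above the statement, via the elementary identity
\begin{equation*}
 E_{\alpha,\beta}^{\mathrm{ext}}(\mu) = E_{\alpha}^{\mathrm{ext}}(\mu) + \left(\beta - \tfrac{1-\alpha}{2}\right)\iint_{\mathbb{R}^2}(|x|+|x'|)\,\dd\nu(x)\,\dd\nu(x'),
\end{equation*}
where $\nu$ denotes the $x$-marginal of $\mu$. The three regimes of the proposition then correspond exactly to the sign of $\beta-\tfrac{1-\alpha}{2}$, and I would treat them in order.

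For $\beta \geq \tfrac{1-\alpha}{2}$ both summands on the right are nonnegative. When $\alpha<1$, the first vanishes only for $\mu=\delta_0\otimes\lambda$ by the discussion preceding the statement, and the second term then vanishes automatically. When $\alpha=1$, the hypothesis $\beta>0=\tfrac{1-\alpha}{2}$ is automatic, $E_1^{\mathrm{ext}}(\mu)=E_1(\mu)$ is the Fourier integral of Section~\ref{sec:cont-without} (nonnegative, vanishing exactly on $\mu=\nu\otimes\lambda$), and the additional term $\beta\iint(|x|+|x'|)\,\dd\nu\,\dd\nu$ strictly penalizes any $\nu\neq\delta_0$, again leaving $\delta_0\otimes\lambda$ as the unique minimizer. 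The boundary case $\beta=\tfrac{1-\alpha}{2}$ with $\alpha<1$ is the one already proved just above the statement via Fourier nonnegativity combined with the localized strict triangle-inequality estimate.

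For $0<\beta<\tfrac{1-\alpha}{2}$ (which forces $\alpha<1$) I would exhibit an explicit divergent family $\mu=\nu_{a,A}\otimes\lambda$ with
\begin{equation*}
  \dd\nu_{a,A}(x)=\frac{1+a}{2A^{a+1}}|x|^a\mathbbm{1}_{[-A,A]}(x)\,\dd x, \qquad a\in(-1,0),
\end{equation*}
which kills the $k_2\neq 0$ part of $E_\alpha$. A direct integration then yields
\begin{equation*}
  E_{\alpha,\beta}^{\mathrm{ext}}(\mu) = (1-\alpha)\,A\,\frac{a+1}{a+2}\left(\frac{2\beta}{1-\alpha}-\frac{a+2}{2a+3}\right).
\end{equation*}
Since $(a+2)/(2a+3)\to 1^-$ as $a\to -1^+$ while $2\beta/(1-\alpha)<1$ by hypothesis, the parenthesis is strictly negative for $a$ sufficiently close to $-1$; fixing such an $a$ and sending $A\to\infty$ drives the energy to $-\infty$, so no minimizer can exist.

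I expect the substantive obstacle to sit in the boundary case $\beta=\tfrac{1-\alpha}{2}$: one must combine two distinct ingredients, namely Fourier nonnegativity (which only constrains the $y$-marginal of $\mu$ to be uniform) and a quantitative strict triangle-inequality estimate localized near any would-be atom $x_0\neq 0$ of $\nu$ (which pins the $x$-marginal to $\delta_0$). Once that uniqueness is in hand, the strict-inequality case reduces to adding a manifestly nonnegative term and the divergence case to one explicit integration, so the entire proposition is essentially a bookkeeping of the three regimes cut out by $\beta-\tfrac{1-\alpha}{2}$.
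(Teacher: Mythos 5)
Your proposal is correct and follows essentially the same route as the paper: the same linearity-in-$\beta$ decomposition into the boundary-case energy $E^{\mathrm{ext}}_{\alpha,(1-\alpha)/2}$ plus the nonnegative term $\left(\beta-\tfrac{1-\alpha}{2}\right)\iint(|x|+|x'|)\,\dd\nu\,\dd\nu$, the same reduction of the cases $\beta\geq\tfrac{1-\alpha}{2}$ to the Fourier-nonnegativity and strict triangle-inequality arguments established just before the statement, and the identical family $\dd\nu_{a,A}\propto|x|^a\mathbbm{1}_{[-A,A]}\,\dd x$ producing the same value $(1-\alpha)A\tfrac{a+1}{a+2}\bigl(\tfrac{2\beta}{1-\alpha}-\tfrac{a+2}{2a+3}\bigr)$ in the divergence regime. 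The only difference is cosmetic: you treat $\alpha=1$ explicitly, which the paper subsumes under ``by the discussion above.''
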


\begin{ackno}
    The authors are grateful to Felipe~Gon\c{c}alves and Stefan~Steinerberger for valuable discussions. The second author thanks Thomas~Hochrainer for providing a copy of \cite{Meurs2015:discrete-to-continuum}, which helped understanding the background from the point of view of material science.
\end{ackno}
\bibliographystyle{amsplain}
\bibliography{refs}

\end{document}